\newtheorem{theorem}{Theorem}[section]
\newtheorem{proposition}[theorem]{Proposition}
\numberwithin{equation}{section}
\DeclareMathOperator\inv{inv}
\DeclareMathOperator\Bin{Bin}
\DeclareMathOperator\argmin{argmin}
\icmltitlerunning{Mallows Ranking Models}
\begin{document}

\twocolumn[
\icmltitle{Mallows Ranking Models: Maximum Likelihood Estimate and Regeneration}

% It is OKAY to include author information, even for blind
% submissions: the style file will automatically remove it for you
% unless you've provided the [accepted] option to the icml2019
% package.

% List of affiliations: The first argument should be a (short)
% identifier you will use later to specify author affiliations
% Academic affiliations should list Department, University, City, Region, Country
% Industry affiliations should list Company, City, Region, Country

% You can specify symbols, otherwise they are numbered in order.
% Ideally, you should not use this facility. Affiliations will be numbered
% in order of appearance and this is the preferred way.
\icmlsetsymbol{equal}{*}

\begin{icmlauthorlist}
\icmlauthor{Wenpin Tang}{UCLA}
\end{icmlauthorlist}

\icmlaffiliation{UCLA}{Department of Mathematics, University of California, Los Angeles, USA}

\icmlcorrespondingauthor{Wenpin Tang}{wenpintang@math.ucla.edu}

% You may provide any keywords that you
% find helpful for describing your paper; these are used to populate
% the "keywords" metadata in the PDF but will not be shown in the document
\icmlkeywords{Bias, convergence rate, Infinite Generalized Mallows model, large deviations, Mallows model, maximum likelihood estimator, random permutations, ranked data, regeneration}

\vskip 0.3in
]

% this must go after the closing bracket ] following \twocolumn[ ...

% This command actually creates the footnote in the first column
% listing the affiliations and the copyright notice.
% The command takes one argument, which is text to display at the start of the footnote.
% The \icmlEqualContribution command is standard text for equal contribution.
% Remove it (just {}) if you do not need this facility.

%\printAffiliationsAndNotice{}  % leave blank if no need to mention equal contribution
\printAffiliationsAndNotice{} % otherwise use the standard text.

\begin{abstract}
This paper is concerned with various Mallows ranking models.
We study the statistical properties of the MLE of Mallows' $\phi$ model.
We also make connections of various Mallows ranking models, encompassing recent progress in mathematics. Motivated by the infinite top-$t$ ranking model, we propose an algorithm to select the model size $t$ automatically. 
The key idea relies on the renewal property of such an infinite random permutation.
Our algorithm shows good performance on several data sets.
\end{abstract}

\section{Introduction}
Ranked data appear in many problems of social choice, user recommendation and information retrieval.
Examples include ranking candidates by a large number of voters in an election (e.g. {\em instant-runoff voting}), and the document retrieval problem where one aims to design a meta-search engine according to a ranked list of web pages output by various search algorithms.
In the sequel, we use the words {\em ranking} and {\em permutation} interchangeably.
A ranking model is given by a collection of items, and an unknown total ordering of these items.

There is a rich body of literature on probabilistic ranking models.
The earliest work dates back to \cite{Thur27, Thur31} where items are ranked according to the order statistics of a Gaussian random vector.
\cite{BT52} introduced an exponential family model by pairwise comparisons,
which was extended by \cite{Luce,Plackett} with comparisons of multiple items.
See also \cite{Hunter04, Cat12, CS15, SW17, HF16} for algorithms and statistical analysis of the Bradley-Terry model and its variants.

A more tractable subclass of the Bradley-Terry model was proposed by \cite{Mallows57} as follows.
 For $n \ge 1$, let $\mathfrak{S}_n$ be the set of permutations of $[n]: = \{1, \ldots, n\}$.
The parametric model
 \begin{equation}
 \label{eq:Mallowsperm}
\mathbb{P}_{\theta, \pi_0, d}(\pi) = \frac{1}{\Psi(\theta, d)} e^{- \theta d (\pi ,\pi_0)} \quad \mbox{for } \pi \in \mathfrak{S}_n,
 \end{equation}
is referred to as the {\em Mallows model}. 
Here $\theta > 0$ is the {\em dispersion parameter}, $\pi_0$ is the {\em central ranking}, $d(\cdot, \cdot): \mathfrak{S}_n \times \mathfrak{S}_n \rightarrow \mathbb{R}_{+}$ is a discrepancy function which is right invariant:
$d(\pi, \sigma) = d(\pi \circ \sigma^{-1}, id) \quad \mbox{for } \pi, \sigma \in \mathfrak{S}_n$,
and $\Psi(\theta, d): = \sum_{\pi \in \mathfrak{S}_n} e^{- \theta d (\pi ,\pi_0)}$ is the normalizing constant. 
%For $\theta = 0$, $\mathbb{P}_{0, \pi_0, d}$ is the uniform distribution on $\mathfrak{S}_n$.
Mallows primarily considered two special cases of \eqref{eq:Mallowsperm}:
\begin{itemize}[itemsep = 3 pt]
\item
{\em Mallows' $\theta$ model}, where $d(\pi, \sigma) = \sum_{i = 1}^n (\pi(i) - \sigma(i))^2$ is the {\em Spearman's rho},
\item
{\em Mallows' $\phi$ model}, where $d(\pi, \sigma) = \inv(\pi \circ \sigma^{-1})$, called the {\em Kendall's tau},
\end{itemize}
where $\inv(\pi):= \# \{(i,j) \in [n]^2: i < j \mbox{ and } \pi(i) > \pi(j)\}$ is the number of inversions of $\pi$. 
The general form \eqref{eq:Mallowsperm} was suggested by \cite{Diaconis88} along with other discrepancy functions. 
%as the Hamming distance, the Cayley distance and so on. 
\cite{Diaconis88, Diaconis89} and \cite{Critchlow85} also pioneered the group representation approach to ranked, and partially ranked data.

In this paper we are primarily concerned with the statistical properties of the maximum likelihood estimate (MLE) of the Mallows' $\phi$ model and its infinite counterpart.
Specializing \eqref{eq:Mallowsperm} with the Kendall's tau, the Mallows' $\phi$ model is expressed as
\begin{equation}
\label{eq:Mallowsphi}
\mathbb{P}_{\theta, \pi_0}( \pi) = \frac{1}{\Psi(\theta)} e^{- \theta \inv(\pi \circ \pi_0^{-1})} \quad \mbox{for } \pi \in \mathfrak{S}_n.
\end{equation}
%In \cite{Mallows57}, the model was parameterized as $\mathbb{P}_{\phi, \pi_0}(\pi) = \Psi(- \ln \phi)^{-1} \phi^{\inv(\pi \circ \pi_0^{-1})}$ with $\phi = e^{-\theta}$.
It is easily seen that $\mathbb{P}_{\theta, \pi_0}$ has a unique mode $\pi_0$ if $\theta > 0$.
The Mallows' $\phi$ model \eqref{eq:Mallowsphi} is of particular interest, since it is an instance of two large classes of ranking models: 
distance-based ranking models \cite{FV86} and multistage ranking models \cite{FV88}.
The one parameter model \eqref{eq:Mallowsphi} also has an $n-1$ parameter extension where $\theta$ is replaced with $\vec{\theta}:= (\theta_1, \ldots \theta_{n-1})$ by factorizing the inversion.
This $n-1$ parameter model, called the {\em Generalized Mallows} (GM$_{\vec{\theta}, \pi_0}$) model, will be discussed in Section \ref{s2}.
See \cite{CFV91, Marden14} for a review of these ranking models.

\cite{FV88} showed that if the central ranking $\pi_0$ is known, the MLE of $\theta$ (or $\vec{\theta}$) can be easily found by convex optimization. 
But it is harder to compute the MLE of $\pi_0$, and only a few heuristic algorithms are available. 
As pointed out in \cite{MPPB12}, the problem of finding the MLE of $\pi_0$ for the Mallows' $\phi$ model is the {\em Kemeny's consensus ranking problem} which is known to be NP-hard \cite{Young86, BTT89}.
They also gave a branch and bound search algorithm to estimate simultaneously $\theta$ (or $\vec{\theta}$) and $\pi_0$.
See also \cite{CSS, ACN, MM09} for approximation algorithms for consensus ranking problem,
\cite{BHS14, ICL16, ICL18} for efficient sampling and learning of Mallows models, 
and \cite{LM08, CDHKL} for various generalizations of Mallows ranking models.
Though there have been efforts in developing algorithms to estimate $\theta$ (or $\vec{\theta}$) and $\pi_0$, not much is known about the statistical properties of the MLE $\widehat{\theta}$ and $\widehat{\pi}_0$ even for the simplest model \eqref{eq:Mallowsphi}.
We provide statistical analysis to the MLE of the model \eqref{eq:Mallowsphi}, and answer the following questions in Section \ref{stheory}:
Are the MLEs $\widehat{\theta}$, $\widehat{\pi}_0$ consistent ?
Is the MLE $\widehat{\theta}$ unbiased ?
How fast does the MLE $\widehat{\pi}_0$ converge to $\pi_0$ ?

When the number of items $n$ is large, learning a complete ranking model becomes impracticable. 
A line of work by \cite{FV86, BOB, MB10, MC12} focused on the top-$t$ orderings for the GM$_{\vec{\theta}, \pi_0}$ model.
Among these work, \cite{MB10} proposed a probability model over the top-$t$ orderings of infinite permutations, called the {\em Infinite Generalized Mallows} (IGM$_{\vec{\theta}, \pi_0}$) model:
\begin{equation}
\label{eq:IGM}
\mathbb{P}_{\vec{\theta}, \pi_0}(\pi) = \frac{1}{\Psi(\vec{\theta})} \cdot \exp \left(-\sum_{j = 1}^t \theta_j s_j (\pi \circ \pi_0^{-1})\right),
\end{equation}
where $s(\pi) := (s_1(\pi), s_2(\pi), \ldots)$ is the {\em inversion table} of $\pi$ defined by 
\begin{equation}
\label{eq:sj}
s_j(\pi):= \pi^{-1}(j) - 1 - \sum_{j' < j} 1_{\{\pi^{-1}(j') < \pi^{-1}(j)\}},
\end{equation}
and $\Psi(\vec{\theta}) = \prod_{j = 1}^{t} (1-e^{-\theta_j})^{-1}$ is the normalizing constant.
By convention, $\theta_j = 0$ for any $j > t$.
The integer `$t$' is referred to as the {\em model size} of the IGM model.
%The model \eqref{eq:IGM} is useful to tackle the problem of ranking a large number items (e.g. retrieving information of a target from web pages output by various search engines).
If $\theta_ 1 = \cdots = \theta_t = \theta$, the IGM$_{\vec{\theta}, \pi_0}$ model is called the {\em single parameter} IGM model.

As explained in \cite{MB10}, the IGM$_{\vec{\theta}, \pi_0}$ model \eqref{eq:IGM} is the marginal distribution of a random permutation of positive integers. 
The single parameter IGM was called the {\em infinite $q$-shuffle} in \cite{GO09} with parameterization $q = e^{-\theta}$. 
They provided a nice construction of the infinite $q$-shuffle, which is reminiscent of {\em absorption sampling} \cite{Rawlings, Kemp} and the {\em repeated insertion model} \cite{DPR}.
The infinite $q$-shuffle was further extended by \cite{PT17} to the {\em $p$-shifted permutations} of positive integers.
But the link between Meil\v{a}-Bao's infinite ranking model and infinite $q$-shuffle or $p$-shifted permutations does not seem to have been previously noticed. 
So we point out this connection which will be detailed in Section \ref{s2}.

One disadvantage of the aforementioned top-$t$ ranking models is that they all require choosing `$t$' manually.
Small model size `$t$' often leads to poor accuracy, and large model size `$t$' needs a considerable amount of training time.
It was observed in \cite{PT17} that the random infinite limit of the single parameter IGM model has a remarkable renewal property. 
This suggests a heuristic procedure to select the model size `$t$' automatically based on Meil\v{a}-Bao's search algorithms.
We will discuss such an approach in Section \ref{s2}.

To summarize, the main contributions of this paper are:
\begin{itemize}
\item
Provide statistical analysis to the MLE of the Mallows' $\phi$ model as well as the single parameter IGM.
\item
Propose a selection algorithm for the model size `$t$' of the top-$t$ Mallows ranking models. 
\end{itemize}
See also \cite{LL02, LB11, LB14, ABSV14, VSCFA, LM18} for the approximate Baysian inference of Mallows' mixture models for clustering heterogeneous ranked data,
and \cite{HG12, HKG, MM14} for hierarchical ranking models.

\section{Mallows Models: `$t$' Selection Algorithm}
\label{s2}
In this section we provide background on various Mallows ranking models, encompassing the closely related $q$-shuffles and $p$-shifted permutations.
We also give an algorithm to select the model size `$t$' for the IGM model \eqref{eq:IGM}.
We follow closely \cite{FV88, MB10, PT17}.

\subsection{Finite Mallows Models}
Given $n$ items labelled by $[n]$, a ranking $\pi \in \mathfrak{S}_n$ is represented by
\begin{itemize}[itemsep = 3 pt]
\item
the {\em word list} $(\pi(1), \pi(2), \ldots, \pi(n))$,
\item
the {\em ranked list} $(\pi^{-1}(1)| \pi^{-1}(2) |\ldots |\pi^{-1}(n))$.
\end{itemize}
Here $\pi(i) = j$ means that the item $i$ has rank $j$, and conversely $\pi^{-1}(j) = i$ means that the $j^{th}$ most preferred is item $i$.
The idea of multistage ranking is to decompose the ranking procedure into independent stages. 
The most preferred item is selected at the first stage, the best of the remaining at the second stage and so on until the least preferred item is selected. 
The correctness of the choice at any stage is accessed through a central ranking $\pi_0$.
For example, if $\pi_0 = (3|1|2)$, then the ranking $\pi = (3|2|1)$ gives a correct choice at the first stage, since item $3$ is the most preferred in both $\pi$ and $\pi_0$.
But at the second stage, among the two remaining items $1$ and $2$, item $2$ is selected by $\pi$ while the right choice is item $1$ according to $\pi_0$.

For any ranking $\pi \in \mathfrak{S}_n$ and $j \in [n-1]$, let $(s_1(\pi), \ldots, s_{n-1}(\pi))$ be the inversion table of $\pi$ defined by \eqref{eq:sj}.
It is easy to see that $s_j(\pi) \in \{0, \ldots, n -j\}$, and there is a bijection between a ranking $\pi$ and the inversion table $(s_1(\pi), \ldots, s_{n-1}(\pi))$.
The quantity $s_j(\pi \circ \pi_0^{-1})$ measures the correctness of the choice at stage $j$: 
$s_j(\pi \circ \pi_0^{-1}) = k$ means that at stage $j$ the $(k+1)^{th}$ best of the remaining items is selected. 
In the example with $\pi_0 = (3|1|2)$ and $\pi = (3|2|1)$, we have $s_1(\pi \circ \pi_0^{-1}) = 0$ and $s_2(\pi \circ \pi_0^{-1}) = 1$.
\cite{FV86} introduced the multistage ranking models of the form:
\begin{equation}
\label{eq:FVmultistage}
\mathbb{P}_{p, \pi_0}(\pi) = \prod_{j = 1}^{n-1} p_{j}\left(s_j(\pi \circ \pi_0^{-1})\right), 
\end{equation}
where $p_{j}(\cdot)$ is a probability distribution on $\{0, \ldots, n - j\}$ at stage $j$. 
The choice of $p_j(k) = (1 - e^{-\theta}) e^{-k \theta }/(1 - e^{-(n-j+1)\theta})$ for $k \in \{0, \ldots, n-j\}$ in \eqref{eq:FVmultistage}, and the remarkable identity $\sum_{j = 1}^{n-1} s_j(\pi) = \inv(\pi)$ for any $\pi \in \mathfrak{S}_n$ yield the Mallows' $\phi$ model \eqref{eq:Mallowsphi}.

This one parameter model has a natural $n-1$ parameter extension by simply taking 
$p_j(k) = (1 - e^{-\theta_j})e^{- \theta_j k}/(1 - e^{-(n-j+1) \theta_j})$  for $k \in \{0, \ldots, n-j\}$.
The $n - 1$ parameter model, called the {\em Generalized Mallows} (GM$_{\vec{\theta}, \pi_0}$) model, is then defined by
\begin{equation}
\label{eq:GMM}
\mathbb{P}_{\vec{\theta}, \pi_0}(\pi) = \frac{1}{\Psi(\vec{\theta})}  \exp \left( - \sum_{j = 1}^{n-1}\theta_j s_j(\pi \circ \pi_0^{-1}) \right),
\end{equation}
where $\Psi(\vec{\theta}) = \prod_{j = 1}^{n-1} (1 - e^{-(n-j+1) \theta_j})(1 - e^{-\theta_j})^{-1}$ is the normalizing constant.
The GM$_{\vec{\theta}, \pi_0}$ model is also called the {\em Mallows' $\phi$-component} model.
The model \eqref{eq:GMM} can also be expressed in the form
$e^{-d_{\vec{\theta}}}(\pi, \pi_0)/\Psi(\vec{\theta})$,
with $d_{\vec{\theta}}(\pi, \pi_0): = \sum_{j = 1}^{n-1} \theta_j s_j(\pi \circ \pi_0^{-1})$.
% Contrary to the Mallows' $\phi$ model, $d_{\vec{\theta}}$ is not a distance since it does not satisfy the triangle inequality.

\subsection{Infinite Mallows Models}
Given a countably infinite items labelled by $\mathbb{N}_{+}: = \{1, 2, \ldots \}$, a ranking $\pi$ over $\mathbb{N}_{+}$ is a bijection from $\mathbb{N}_{+}$ onto itself represented by the word list $(\pi(1), \pi(2), \ldots)$ or the ranked list $(\pi^{-1}(1)| \pi^{-1}(2) | \ldots)$.
A top-$t$ ordering of $\pi$ is the prefix $(\pi^{-1}(1)| \ldots |\pi^{-1}(t))$.
Motivated by the GM$_{\vec{\theta}, \pi_0}$ model \eqref{eq:GMM},
\cite{MB10} proposed the {\em Infinite Generalized Mallow} (IGM$_{\vec{\theta}, \pi_0}$) model \eqref{eq:IGM}, which can also be put in the form
$e^{-d_{\vec{\theta}}(\pi, \pi_0)}/\Psi(\vec{\theta})$,
with $d_{\vec{\theta}}(\pi, \pi_0): = \sum_{j = 1}^t \theta_j s_j(\pi \circ \pi_0^{-1})$.
In particular, $s_j$ is distributed as Geo$(1- e^{-\theta_j})$ on $\{0,1,\ldots\}$.
As explained in \cite{MB10}, one can regard $\pi$ as a top-$t$ ordering, and $\pi_0$ as an ordering over $\mathbb{N}_{+}$.
If $\theta_1 = \cdots =\theta_t = \theta$, then the model \eqref{eq:IGM} simplifies to 
\begin{equation}
\label{eq:singleIGM}
\mathbb{P}_{\theta, \pi_0}(\pi) = \frac{1}{\Psi(\theta)} \exp \left(-\theta \sum_{j = 1}^t s_j(\pi \circ \pi_0^{-1}) \right),
\end{equation}
called the {\em single parameter} IGM model. 

It is easily seen that the single parameter IGM model \eqref{eq:singleIGM} is the marginal distribution of a random permutation of positive integers. Formally, this random infinite permutation is distributed as
\begin{equation}
\label{eq:GOIGM}
\mathbb{P}_{\theta, \pi_0}(\pi) = \frac{1}{\Psi(\theta)}  \exp \left( - \theta \sum_{j = 1}^{\infty} s_j(\pi \circ \pi_0^{-1}) \right).
\end{equation}
In the terminology of \cite{GO09}, for $\pi$ defined by \eqref{eq:GOIGM}, $\pi \circ \pi_0^{-1}$ is the {\em infinite $e^{-\theta}$-shuffle}.
The latter was generalized by \cite{PT17} to {\em $p$-shifted permutations}, 
where $p = (p_1, p_2, \ldots)$ is a discrete distribution on $\mathbb{N}_{+}$ with $p_1 > 0$.
Here we present a further extension of $p$-shifted permutations.

Let $P = (p_{ij})_{i,j \in \mathbb{N}_{+}}$ be a stochastic matrix on $\mathbb{N}_{+}$ with $p^i = (p_{ij})_{j \in \mathbb{N}_{+}}$ being the $i^{th}$ row of $P$. 
Assume that $\lim_{n \rightarrow \infty} \prod_{i = 1}^n \left(1 - p_{i1} \right) = 0$.
Call a random permutation $\Pi$ of $\mathbb{N}_{+}$ a {\em $P$-shifted permutation} of $\mathbb{N}_{+}$
if $\Pi$ has the distribution defined by the following construction from the independent sample $(X_i)_{i \ge 1}$, with $X_i$ distributed as $p^i$.
Inductively, let $\Pi_1 := X_1$, and for $i \ge 2$, let $\Pi_i := \psi(X_i)$ where $\psi$ is the increasing bijection from $\mathbb{N}_{+}$ to $\mathbb{N}_{+} \setminus \{ \Pi_1, \cdots, \Pi_{i-1} \}$.
For example, if $X_1 = 2$, $X_2 = 1$, $X_3 = 2$, $X_4 = 3$, $X_5 = 4$, $X_6 = 1 \ldots$, then the associated permutation is $(2,1,4,6,8,3,\ldots)$.

%The condition \eqref{eq:gua} guarantees that $\Pi$ such constructed is almost surely a permutation of $\mathbb{N}_{+}.
Now the aforementioned infinite ranking models are subcases of  the $P$-shifted permutations.
\begin{itemize}[itemsep = 3 pt]
\item
If $p^i = p$ with $p_1 > 0$ for all $i$, then we get the $p$-shifted permutation.
\item
If $p^i = \mbox{Geo}(1 - e^{-\theta})$ on $\mathbb{N}_{+}$ for all $i$, then we get the single parameter IGM model, or infinite $e^{-\theta}$-shuffle, 
i.e. $\Pi \stackrel{d}{=} \pi \circ \pi_0^{-1}$ for $\pi$ distributed according to \eqref{eq:GOIGM}.
\item
If $p^i = \mbox{Geo}(1 - e^{-\theta_i})$ on $\mathbb{N}_{+}$ for each $i$, then $\Pi \stackrel{d}{=} \pi \circ \pi_0^{-1}$ for $\pi$ an infinite version of the IGM model \eqref{eq:IGM}.
\end{itemize}

%\begin{figure}[ht]
%\vskip 0.2in
%\begin{center}
%\centerline{\includegraphics[width=\columnwidth]{Diagram.png}}
%\caption{Diagram of Mallows' type ranking models.}
%\label{icml-historical}
%\end{center}
%\vskip -0.2in
%\end{figure}

\subsection{`$t$' Selection Algorithm}
We present an algorithm to select the model size `$t$' automatically for the top-$t$ IGM models.
The heuristic comes from the renewal property of the single parameter IGM model \eqref{eq:singleIGM}.
We need the following vocabulary.

Let $\Pi$ be a permutation of $\mathbb{N}_{+}$.
Call $n \in \mathbb{N}_{+}$ a {\em splitting time} of $\Pi$
if $\Pi$ maps $[1,n]$ onto itself.
The set of splitting times of $\Pi$ is the collection of finite right endpoints of some finite or infinite family of {\em components} of $\Pi$, say $\{I_j\}$. 
So $\Pi$ does not act as a permutation on any proper subinterval of $I_j$.
%$\Pi$ acts on each of its components $I_j$ as an {\em indecomposable permutation} of $I_j$, meaning that 
These components $I_j$ form a partition of $\mathbb{N}_{+}$, which is coarser than the partition by cycles of $\Pi$.
For example, the permutation $\pi = (1)(2,4)(3) \in \mathfrak{S}_4$ induces the partition by components $[1] [2,3,4]$.

The idea is to use the single parameter IGM model to preselect `$t$', which hinges on the renewal property of the latter.
Then we proceed to train a top-$t$ ranking model.
\cite{PT17} proved that for $p = (p_1, p_2, \ldots)$ a discrete distribution with $p_1 > 0$ and $\sum_{i \ge 1} ip_i < \infty$, a $p$-shifted permutation $\Pi$ is a concatenation of independent and identically distributed (i.i.d.) components. That is, $\Pi$ is characterized by $L$ a distribution on $\mathbb{N}_{+}$, and $(Q_n)_{n \ge 1}$ a sequence of distributions on indecomposable permutations such that
\begin{itemize}[itemsep = 3 pt]
\item
the lengths of components $(L_i)_{i \ge 1}$ are i.i.d. as $L$,
\item
given the length of a component $L_i = n_i$, the reduced component defined via conjugation of $\Pi$ by the shift from the component to $[n_i]$ is distributed as $Q_{n_i}$.
\end{itemize}
To illustrate,
$
(\, \underbrace{2, \, 3, \, 4,\, 1}_{L_1 = 4}, \, \underbrace{6,\, 8, \, 7, \,10, \,5, \, 9}_{L_2 = 6}, \, \underbrace{12, \, 13, \, 11}_{L_3 = 3}, \ldots)
$
Moreover, the probability generating function of $L$ is given by
$F(z) = 1 - \frac{1}{1 + \sum_{n = 1}^{\infty} u_n z^n}$,
with $u_n: = \prod_{i = 1}^n \sum_{j = 1}^i p_i$.
Specializing this renewal construction to the single parameter IGM model, 
we get the following proposition which is the foundation of Algorithm \ref{algo1} described right after.

\begin{proposition}
\label{eq:regenMallowsphi}
Let $\Pi$ be a random permutation of $\mathbb{N}_{+}$ distributed as $\mathbb{P}_{\theta, id}$ defined by \eqref{eq:GOIGM}.
Let $L$ be the common distribution of lengths of components of $\Pi$. Then
\begin{equation}
\label{eq:mean}
\mathbb{E}L = \frac{1}{(e^{-\theta}; e^{-\theta})_{\infty}},
\end{equation}
where $(a; q)_{\infty}: = \prod_{k = 0}^{\infty} (1 - a q^{k})$ is the Q-Pochhammer function.
\end{proposition}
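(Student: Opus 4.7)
The plan is to specialize the regenerative theory of $p$-shifted permutations from \cite{PT17} (summarized in the paragraph preceding the proposition) to the geometric case arising from the single parameter IGM with $\pi_0=\mathrm{id}$. First I would observe that under $\mathbb{P}_{\theta,\mathrm{id}}$, the infinite permutation $\Pi$ defined by \eqref{eq:GOIGM} is exactly the $p$-shifted permutation of $\mathbb{N}_+$ in which every row is the same geometric law $p=\mathrm{Geo}(1-e^{-\theta})$, i.e.\ $p_j=(1-e^{-\theta})e^{-\theta(j-1)}$. The two hypotheses of the PT17 concatenation theorem are then immediate: $p_1=1-e^{-\theta}>0$ and $\sum_{j\ge 1} j p_j = 1/(1-e^{-\theta})<\infty$. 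So $\Pi$ decomposes as a concatenation of i.i.d.\ components whose common length $L$ has probability generating function
\[
F(z)=1-\frac{1}{1+\sum_{n=1}^{\infty} u_n z^n},\qquad u_n=\prod_{i=1}^n \sum_{j=1}^i p_j.
\]

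The next step would be an elementary computation of the renewal coefficients $u_n$. Since the geometric CDF gives $\sum_{j=1}^i p_j=1-e^{-i\theta}$, one gets
\[
u_n=\prod_{i=1}^n (1-e^{-i\theta})=(e^{-\theta};e^{-\theta})_n,
\]
the $n$-th partial $q$-Pochhammer product. Because $\sum_{i\ge 1} e^{-i\theta}<\infty$, the infinite product $c:=(e^{-\theta};e^{-\theta})_\infty$ converges to a strictly positive limit, so $u_n\downarrow c>0$.

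It remains to extract $\mathbb{E}L=F'(1)$. Setting $U(z)=1+\sum_n u_n z^n$, I have $F=1-1/U$ and $F'=U'/U^2$. Applying Abel's theorem to the sequence $u_n\to c>0$ yields $(1-z)U(z)\to c$ and $(1-z)^2 U'(z)\to c$ as $z\uparrow 1$, whence $F'(1^-)=1/c$, which is \eqref{eq:mean}. An equivalent formulation is the discrete elementary renewal theorem: $u_n$ is the renewal mass function $\mathbb{P}(n\text{ is a splitting time})$, the distribution $L$ is aperiodic because $\mathbb{P}(L=1)=f_1=1-e^{-\theta}>0$, and so $\lim_n u_n=1/\mathbb{E}L$. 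Given PT17 the whole argument is essentially bookkeeping; the only non-trivial step is this Tauberian / renewal-theoretic passage from $u_n\to c$ to $F'(1)=1/c$, and it is entirely standard, so I do not expect a serious obstacle.
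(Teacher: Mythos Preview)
Your proposal is correct and is exactly the approach the paper indicates: the paper does not give a standalone proof but merely says the proposition follows by ``specializing this renewal construction'' of \cite{PT17} to the geometric law $p=\mathrm{Geo}(1-e^{-\theta})$, and your argument carries out precisely that specialization, computing $u_n=(e^{-\theta};e^{-\theta})_n\to(e^{-\theta};e^{-\theta})_\infty$ and reading off $\mathbb{E}L$ via the renewal theorem / Abel limit.
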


\begin{figure}[ht]
\vskip 0.2in
\begin{center}
\centerline{\includegraphics[width=0.75\columnwidth]{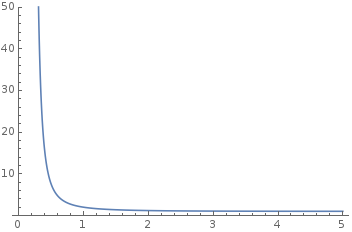}}
\caption{Plot of $\theta \rightarrow 1/(e^{-\theta}; e^{-\theta})_{\infty}$ for $\theta \in  [0,5]$.}
%\label{icml-historical}
\end{center}
\vskip -0.2in
\end{figure}

Pitman-Tang's theory indicates that for the single parameter model \eqref{eq:GOIGM},
the first component of $\pi \circ \pi_0^{-1}$ has length $L$ whose expectation is $1/(e^{-\theta}; e^{-\theta})_{\infty}$.
Given the dispersion parameter $\theta$, a complete permutation is expected to occur at some place close to $1/(e^{-\theta}; e^{-\theta})_{\infty}$.
The latter can be regarded as the {\em effective length} of the random infinite permutation, which suggests a natural candidate for `$t$' in top-$t$ ranking models.
Since $\theta$ is unknown, we would like to find the `$t$' closest to the effective length. 
Given $t$ in a suitable range $\mathbb{T}$, we fit the single parameter IGM model \eqref{eq:singleIGM} to get the MLE $\widehat{\theta}(t)$ by the algorithms in \cite{MB10}.
Those algorithms also work for partially ranked data.
Then we search for 
\begin{equation}
\label{eq:t}
t : = \min \left\{ \argmin_{s \in \mathbb{T}} \left|s - \frac{1}{(e^{-\widehat{\theta}(s)}; e^{-\widehat{\theta}(s)})_{\infty}}\right|, \lambda t_{\max} \right\},
\end{equation}
where $\mathbb{T}$ is the range of search for model sizes, 
$\lambda \in (0,1)$ is a user-defined cutoff fraction to avoid overfitting, and $t_{\max}$ is the maximum length of permutations in the data.
Practically, one starts with `$s$' of small values to narrow down the choices for the effective length.
That way, one only needs to search for a small proportion, not the full range of $[t_{\min}, t_{\max}]$.
With `$t$' selected according to \eqref{eq:t}, we can then fit the IGM model \eqref{eq:IGM} by means of MLE.

\begin{algorithm}[tb]
   \caption{`$t$' selection algorithm}
   \label{algo1}
\begin{algorithmic}
   \STATE $\theta_0 \gets \mbox{MB}(1)$ \qquad \qquad ~ (Run Meil\v{a}-Bao's algorithm)
   
   \STATE Choose $\mathbb{T} \ni 1/(e^{-\theta_0}; e^{-\theta_0})_{\infty}$ of a small range
   
   \STATE Initialize $Err \gets \infty, \,\,  t\_\mbox{SEL} \gets 0$
   
   \FOR{$t$ in $\mathbb{T}$}
   \STATE $\theta \gets \mbox{MB}(t)$ \qquad \qquad (Run Meil\v{a}-Bao's algorithm)
   
   \IF{$|t - 1/(e^{-\theta}; e^{-\theta})_{\infty}| < Err$}
   \STATE $Err \gets |t - 1/(e^{-\theta}; e^{-\theta})_{\infty}|$
   \STATE $ t\_\mbox{SEL} \gets t$
   \ENDIF
   \ENDFOR
   \STATE \textbf{return} $\min(t\_\mbox{SEL}, \lambda t_{\max})$
\end{algorithmic}
\end{algorithm}

\section{Statistical Properties of the MLE}
\label{stheory}
In this section we provide statistical analysis to the MLE of Mallows models. 

\subsection{Main Theorems}
\cite{Mu16} proved that the MLE $\widehat{\theta}$ for the general Mallows model \eqref{eq:Mallowsperm} is consistent if $\pi_0$ is known. 
His approach relies on the concept of {\em permutons} \cite{HKMRS}.
The following result shows that the MLE $\widehat{\theta}$ is always biased upwards for the Mallows' $\phi$ model.
\begin{theorem}[Bias of $\widehat{\theta}$]
\label{thm:bias}
Let $\mathbb{P}_{\theta, \pi_0}$ be defined by \eqref{eq:Mallowsphi}, and $\widehat{\theta}$ be the MLE of $\theta$ with $N$ samples. 
Then for each $N \ge 1$,
\begin{equation}
\label{eq:MIGMbias}
\mathbb{E}_{\theta, \pi_0} \widehat{\theta} > \theta. 
\end{equation}
%Moreover, the error $\widehat{\theta} - \theta$ is of order $1/\sqrt{N}$ with high probability.
\end{theorem}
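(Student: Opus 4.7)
The plan is a Jensen-inequality argument on the MLE map. For an i.i.d.\ sample $\pi_1,\dots,\pi_N$ from \eqref{eq:Mallowsphi}, the log-likelihood is strictly concave in $\theta$, and the first-order condition reads $\mu(\widehat\theta)=\bar X_N$, where $\mu(\theta):=\mathbb{E}_{\theta,\pi_0}\inv(\pi\circ\pi_0^{-1})$ and $\bar X_N:=N^{-1}\sum_i\inv(\pi_i\circ\pi_0^{-1})$. Standard exponential-family calculus gives $\mu'(\theta)=-\var_\theta(\inv)<0$ and $\mu''(\theta)=\kappa_3(\inv)$, the third cumulant of the number of inversions. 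Because $\mu$ is strictly decreasing, the chain rule yields $(\mu^{-1})''=-\mu''/(\mu')^3$, whose sign coincides with that of $\mu''$. So if $\kappa_3(\inv)>0$, then $\mu^{-1}$ is strictly convex, $\bar X_N$ is nondegenerate for $n\ge 2$ (its variance is $\var_\theta(\inv)/N>0$), and strict Jensen gives
\[
\mathbb{E}_{\theta,\pi_0}\widehat\theta=\mathbb{E}[\mu^{-1}(\bar X_N)]>\mu^{-1}(\mathbb{E}\bar X_N)=\mu^{-1}(\mu(\theta))=\theta.
\]

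It remains to prove $\kappa_3(\inv)>0$. By the multistage decomposition in Section~\ref{s2}, under $\mathbb{P}_{\theta,\pi_0}$ the inversion-table coordinates $s_j(\pi\circ\pi_0^{-1})$ are independent truncated geometrics on $\{0,\dots,n-j\}$ with parameter $q=e^{-\theta}$, and $\inv=\sum_{j=1}^{n-1}s_j$, so additivity of cumulants reduces the task to $\kappa_3(s_j)>0$ for each $j$. A direct triple differentiation of the cumulant generating function of a truncated geometric yields, with $a:=n-j+1\ge 2$,
\[
\kappa_3(s_j)=\frac{q(1+q)}{(1-q)^3}-a^3\,\frac{q^a(1+q^a)}{(1-q^a)^3}=\theta^{-3}\bigl[H(\theta)-H(a\theta)\bigr],
\]
where $H(v):=v^3 e^v(e^v+1)/(e^v-1)^3$. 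The identities $e^v\pm 1=2e^{v/2}(\sinh\text{ or }\cosh)(v/2)$ simplify this to $H(v)=2J(v/2)$ with $J(u):=u^3\cosh u/\sinh^3 u$, so positivity reduces to showing $H$ strictly decreasing on $(0,\infty)$.

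One computes $(\log J)'(u)=3/u+\tanh u-3\coth u$. Using the double-angle identities $\coth(w/2)=(\cosh w+1)/\sinh w$ and $\tanh(w/2)=(\cosh w-1)/\sinh w$ and substituting $w=2u$, the inequality $(\log J)'(u)<0$ is equivalent to the elementary claim
\[
L(w):=w(\cosh w+2)-3\sinh w>0\qquad(w>0),
\]
which follows from $L(0)=0$ together with
\[
L'(w)=2\sinh(w/2)\bigl[w\cosh(w/2)-2\sinh(w/2)\bigr]>0,
\]
the bracket being positive because $\tanh(w/2)<w/2$ for $w>0$.

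The main obstacle is precisely this last step---establishing that the third cumulant of a truncated geometric is strictly positive on $q\in(0,1)$. Both degenerate limits $q\to 0$ (point mass at $0$) and $q\to 1$ (uniform on $\{0,\dots,m\}$) give $\kappa_3=0$, so the sign is invisible from a distributional-shape argument and pinning it down requires an explicit reduction such as the hyperbolic one above. The remaining ingredients---the Fligner--Verducci independence of the $s_j$, the exponential-family identities for $\mu'$ and $\mu''$, and the Jensen step---are routine.
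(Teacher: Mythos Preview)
Your argument follows the same route as the paper: write $\widehat\theta=\mu^{-1}(\bar X_N)$ with $\mu(\theta)=\mathbb{E}_{\theta}\inv$, show that $\mu^{-1}$ is strictly convex, and apply Jensen. Where the paper simply asserts ``by elementary analysis, $\theta\mapsto g(e^{-\theta})$ is strictly convex and decreasing,'' you actually prove the convexity by establishing $\mu''(\theta)=\kappa_3(\inv)>0$, reducing it via the independent truncated-geometric decomposition to the monotonicity of $H(v)=v^3e^v(e^v+1)/(e^v-1)^3$. Your hyperbolic computation is correct, and this step is more detailed than what the paper offers.

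There is, however, one gap relative to the paper. You take $\bar X_N=N^{-1}\sum_i\inv(\pi_i\circ\pi_0^{-1})$ with the \emph{true} $\pi_0$, so your first-order condition $\mu(\widehat\theta)=\bar X_N$ characterizes the MLE only when $\pi_0$ is known. The paper explicitly treats the joint MLE as well, where $\pi_0$ is estimated simultaneously. The missing step is short: since $\widehat\pi_0$ minimizes $\sum_i\inv(\pi_i\circ\sigma^{-1})$ over $\sigma\in\mathfrak{S}_n$, and $\mu^{-1}$ is strictly decreasing, one has
\[
\widehat\theta \;=\; \mu^{-1}\!\Bigl(N^{-1}\sum_i\inv(\pi_i\circ\widehat\pi_0^{-1})\Bigr)\;\ge\;\mu^{-1}\!\Bigl(N^{-1}\sum_i\inv(\pi_i\circ\pi_0^{-1})\Bigr),
\]
and your Jensen bound applied to the right-hand side gives $\mathbb{E}\widehat\theta>\theta$ in that case too. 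With this addition your proof is complete and, on the convexity step, sharper than the paper's.
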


The analysis of the MLE $\widehat{\pi}_0$ is more subtle since it lives in a discrete space.
By general results of \cite{NM94, CS12}, one can prove the consistency of $\widehat{\pi}_0$. 
Here we establish a concentration bound of $\widehat{\pi}_0$ at $\pi_0$ from which the consistency is straightforward.
This concentration bound also gives a confidence interval for the central ranking in the Mallows' $\phi$ model.

\begin{theorem}[Convergence rate of $\widehat{\pi}_0$]
\label{thm:cvrate}
Let $\mathbb{P}_{\theta, \pi_0}$ be defined by \eqref{eq:Mallowsphi}, and $\widehat{\pi}_0$ be the MLE of $\pi_0$ with $N$ samples.
Then for $N$ large enough,
\begin{equation} 
\mathbb{P}_{\theta, \pi_0}(\widehat{\pi}_0 \ne \pi_0) \ge \frac{1}{1-e^{-\theta}} \sqrt{\frac{2}{\pi N}} \left(\cosh \frac{\theta}{2} \right)^{-N},
\end{equation}
and
\begin{equation} 
\label{eq:upperbd}
\mathbb{P}_{\theta, \pi_0}(\widehat{\pi}_0 \ne \pi_0) \le ( n - H_n) n! \left(\cosh \frac{\theta}{2} \right)^{-N},
\end{equation}
where $H_n : = \sum_{i = 1}^n \frac{1}{i}$ is the harmonic sum.
\end{theorem}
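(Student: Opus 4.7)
I would reduce to the case $\pi_0 = id$ using the right invariance of Kendall's tau, so that $\pi^{(1)},\ldots,\pi^{(N)}$ are i.i.d.\ from $\mathbb{P}_{\theta, id}$ and the MLE becomes $\widehat{\pi}_0 = \argmin_{\sigma} L_N(\sigma)$ with $L_N(\sigma) := \sum_{i=1}^N \inv(\pi^{(i)} \sigma^{-1})$. For each $\sigma$ set $Y_\sigma := \inv(\pi\sigma^{-1}) - \inv(\pi)$ under a single draw $\pi \sim \mathbb{P}_{\theta, id}$. Then $\{\widehat{\pi}_0 \neq id\}$ coincides (up to ties) with $\{\exists \sigma \neq id:\sum_{i=1}^N Y_\sigma^{(i)} \leq 0\}$, and both bounds follow from controlling this event.

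\textbf{Lower bound.} I would specialize to a single adjacent transposition $\sigma^* = (k,k+1)$. A short calculation (pairs $(i,k)\leftrightarrow(i,k+1)$ and $(k,j)\leftrightarrow(k+1,j)$ cancel out) shows that only the pair $(k,k+1)$ contributes to $\inv(\pi\sigma^{*-1})-\inv(\pi)$, so $Y_{\sigma^*}\in\{\pm1\}$. The involution $\pi\leftrightarrow\pi\sigma^*$ on $\mathfrak{S}_n$ then gives $\mathbb{P}(Y_{\sigma^*}=-1) = e^{-\theta}/(1+e^{-\theta})$. Hence $\sum_i Y_{\sigma^*}^{(i)} = 2B - N$ with $B\sim\Bin(N,(1+e^{-\theta})^{-1})$, and $\mathbb{P}(\widehat{\pi}_0\neq id) \geq \mathbb{P}(B\leq N/2)$. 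Stirling yields the central asymptotic $\mathbb{P}(B=\lfloor N/2\rfloor) = (1+o(1))\sqrt{2/(\pi N)}\,(2\sqrt{p(1-p)})^N$ with $p = 1/(1+e^{-\theta})$ and $2\sqrt{p(1-p)} = 1/\cosh(\theta/2)$, producing the exponential rate. The extra factor $1/(1-e^{-\theta})$ comes from $\mathbb{P}(B\leq N/2) = \sum_{j\geq 0}\mathbb{P}(B=\lfloor N/2\rfloor - j)$, since the consecutive ratios $\mathbb{P}(B=k-1)/\mathbb{P}(B=k)$ converge to $(1-p)/p = e^{-\theta}$, making the tail a geometric-like series summing to $1/(1-e^{-\theta})$ times the central term.

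\textbf{Upper bound.} I would combine a union bound with Chernoff at the self-dual tilt $s=\theta/2$:
$$\mathbb{P}(\widehat{\pi}_0\neq id) \;\leq\; \sum_{\sigma\neq id}\mathbb{E}\bigl[e^{-(\theta/2)Y_\sigma}\bigr]^N.$$
The one-sample MGF is $\mathbb{E}[e^{-(\theta/2)Y_\sigma}] = \Psi(\theta)^{-1}\sum_{\pi}e^{-(\theta/2)(\inv(\pi)+\inv(\pi\sigma^{-1}))}$, which for an adjacent transposition evaluates \emph{exactly} to $1/\cosh(\theta/2)$, matching the lower bound rate. For general $\sigma\neq id$ I would prove $\mathbb{E}[e^{-(\theta/2)Y_\sigma}] \leq 1/\cosh(\theta/2)$ by an inductive reduction through a reduced word $\sigma = s_{i_k}\cdots s_{i_1}$, chaining the adjacent base case, and supplementing with the triangle inequality $\inv(\pi)+\inv(\pi\sigma^{-1}) \geq \inv(\sigma)$ as a sanity check via the factorized form of $\Psi(\theta)$. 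Summing over $\sigma\neq id$ and absorbing combinatorial slack should give the stated prefactor $(n-H_n)n!$.

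\textbf{Main obstacle.} The delicate step is the uniform MGF bound $\mathbb{E}[e^{-(\theta/2)Y_\sigma}]\leq 1/\cosh(\theta/2)$ for arbitrary $\sigma\neq id$: although $\inv(\pi)=\sum_j s_j(\pi)$ splits as a sum of independents under $\mathbb{P}_{\theta,id}$, the conjugated statistic $\inv(\pi\sigma^{-1})$ does not factor in the same coordinates, so one cannot simply multiply single-coordinate MGFs. I expect the cleanest route is the inductive chaining through reduced words above, but producing the precise prefactor $(n-H_n)n!$ — rather than the cruder bound $(n!-1)(\cosh(\theta/2))^{-N}$ — requires a careful combinatorial accounting of how permutations contribute at each Kendall distance from $id$.
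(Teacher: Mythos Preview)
Your lower bound argument is exactly the paper's: reduce to an adjacent transposition, observe $Y\in\{\pm1\}$ with $\mathbb{P}(Y=-1)=1/(1+e^{\theta})$, and apply the sharp binomial large-deviation asymptotic; the constant $1/(1-e^{-\theta})$ and the rate $(\cosh(\theta/2))^{-N}$ fall out just as you describe.

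The upper bound, however, has a real gap. Your plan hinges on proving $\mathbb{E}[e^{-(\theta/2)Y_\sigma}]\le 1/\cosh(\theta/2)$ for \emph{every} $\sigma\neq id$ by ``inductive chaining through a reduced word''. This does not go through in any obvious way: writing $\sigma=s_{i_k}\cdots s_{i_1}$ gives a telescoping decomposition $Y_\sigma=\sum_j\Delta_j$, but the increments $\Delta_j$ are neither independent nor marginally distributed like $Y_{s_{i_j}}$ (the intermediate law is $\mathbb{P}_{\theta,\mu}$ for a moving $\mu$, and the detailed-balance symmetry you need breaks). So the MGF does not factor, and the adjacent base case cannot be chained into a bound for general $\sigma$. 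You also guess that the prefactor $(n-H_n)n!$ arises from stratifying by Kendall distance; it does not.

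The paper avoids the uniform MGF bound entirely. Its key lemma (Proposition~\ref{comparison}) treats only \emph{transpositions} $(j,k)$---arbitrary, not just adjacent. For a transposition the involution $\pi\leftrightarrow\pi(j,k)$ gives the exact symmetry $p_{-m}=e^{\theta m}p_m$ for the law of $\inv(\pi(j,k))-\inv(\pi)$; evaluating the Chernoff bound at the self-dual tilt $\lambda=\theta/2$ then yields $\mathbb{P}_{\theta,id}((j,k)\succeq id)\le(\cosh(\theta/2))^{-N}$ directly. For a general $\sigma$, decompose it as a product of $k=n-\#_{\mathrm{cyc}}(\sigma)$ transpositions via its cycle structure. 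If $\sigma\succeq id$ then along the chain $id,\tau_1,\tau_1\tau_2,\ldots,\sigma$ at least one step satisfies $\tau_1\cdots\tau_i\succeq\tau_1\cdots\tau_{i-1}$, and each such step is a single-transposition comparison controlled by the lemma. A union bound gives $\mathbb{P}_{\theta,id}(\sigma\succeq id)\le(n-\#_{\mathrm{cyc}}(\sigma))(\cosh(\theta/2))^{-N}$, and summing over $\sigma\in\mathfrak{S}_n$ using the Stirling-number identity $\sum_{\sigma}\#_{\mathrm{cyc}}(\sigma)=n!\,H_n$ produces the prefactor $(n-H_n)n!$. The combinatorics is about cycles, not Kendall distance, and the hard analytic step is confined to transpositions, where the symmetry is exact.
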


To the best of our knowledge, Theorems \ref{thm:bias} and \ref{thm:cvrate} are new. 
Their proof will be given in the next two subsections.

Similar to Theorem \ref{thm:bias}, the MLE of $\theta$ is biased upwards for the single parameter IGM model, confirming an observation in \cite{MB10}.
\begin{theorem}[Bias of $\widehat{\theta}$]
\label{thm:bias2}
Let $\mathbb{P}_{\theta, \pi_0}$ be the distribution of the single parameter IGM model, and $\widehat{\theta}$ be the MLE of $\theta$ with $N$ samples. Then for each $N \ge 1$,
\begin{equation}
\mathbb{E}_{\theta, \pi_0} \widehat{\theta} > \theta.
\end{equation}
\end{theorem}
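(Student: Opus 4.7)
The plan is to reduce the theorem to a single application of Jensen's inequality, after writing the MLE in closed form. Following the setup of Theorem \ref{thm:bias}, I treat $\pi_0$ as known, so the model parameter is only $\theta$.

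First I would write down the log-likelihood. For $N$ i.i.d.\ samples $\pi^{(1)}, \ldots, \pi^{(N)}$ from \eqref{eq:singleIGM}, since $\Psi(\theta) = (1-e^{-\theta})^{-t}$, the log-likelihood in $\theta$ is
\begin{equation*}
\ell(\theta) \;=\; Nt \log(1-e^{-\theta}) - \theta S, \qquad S := \sum_{i=1}^{N} \sum_{j=1}^{t} s_j(\pi^{(i)} \circ \pi_0^{-1}).
\end{equation*}
Solving $\ell'(\theta) = 0$ gives the closed form
\begin{equation*}
\widehat{\theta} \;=\; \log\!\left(1 + \frac{Nt}{S}\right) \;=: \; f(S) \qquad \text{when } S > 0,
\end{equation*}
while on the positive-probability event $\{S=0\}$ one has $\widehat{\theta} = +\infty$, which only strengthens the inequality. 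Note that $S$ is a sufficient statistic and, by the very construction of the (single-parameter) IGM in \eqref{eq:IGM}, the coordinates $s_j(\pi^{(i)} \circ \pi_0^{-1})$ are i.i.d.\ $\mathrm{Geo}(1-e^{-\theta})$ on $\{0,1,\ldots\}$, so
\begin{equation*}
\mathbb{E}_{\theta,\pi_0}[S] \;=\; Nt \cdot \frac{e^{-\theta}}{1-e^{-\theta}}.
\end{equation*}

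Next I would verify that $f(x) = \log(x+Nt) - \log x$ is strictly convex on $(0,\infty)$: a direct calculation yields
\begin{equation*}
f''(x) \;=\; \frac{1}{x^{2}} - \frac{1}{(x+Nt)^{2}} \;=\; \frac{Nt\,(2x+Nt)}{\bigl[x(x+Nt)\bigr]^{2}} \;>\; 0.
\end{equation*}
Evaluating $f$ at $\mathbb{E}_{\theta,\pi_0}[S]$ gives
\begin{equation*}
f\bigl(\mathbb{E}_{\theta,\pi_0}[S]\bigr) \;=\; \log\!\left(1 + \frac{1-e^{-\theta}}{e^{-\theta}}\right) \;=\; \log(e^{\theta}) \;=\; \theta.
\end{equation*}
Since $\theta > 0$, each geometric summand is non-degenerate, hence $S$ is non-degenerate, and Jensen's inequality is strict:
\begin{equation*}
\mathbb{E}_{\theta,\pi_0}\widehat{\theta} \;=\; \mathbb{E}[f(S)] \;>\; f\bigl(\mathbb{E}[S]\bigr) \;=\; \theta.
\end{equation*}

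In this argument there is no substantive obstacle; the only mild care needed is the edge case $\{S = 0\}$, on which the MLE is infinite. This can be handled either by adopting the convention $\mathbb{E}\widehat{\theta} = +\infty > \theta$, or by working with the truncated estimator $\widehat\theta \wedge M$, applying Jensen on $\{S \ge 1\}$, and letting $M \to \infty$; the atom at $+\infty$ contributes a non-negative amount. The same skeleton, using the convexity of the appropriate $f$ coming from the finite-sum normalizing constant $\Psi(\theta)=\sum_{\pi\in \mathfrak S_n} e^{-\theta\,\inv(\pi)}$, underlies Theorem \ref{thm:bias}, so once that version is on the page, the present theorem will essentially be a one-paragraph verification.
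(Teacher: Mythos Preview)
Your argument is correct and matches the paper's approach: the paper does not prove Theorem~\ref{thm:bias2} separately but states it is analogous to Theorem~\ref{thm:bias}, whose proof is precisely Jensen's inequality applied to the strictly convex map from the sufficient statistic to $\widehat\theta$. Your closed form $\widehat\theta = \log(1+Nt/S)$ makes this especially transparent in the IGM case, where the normalizing constant $\Psi(\theta)=(1-e^{-\theta})^{-t}$ is explicit, whereas for the finite Mallows' $\phi$ model the paper works with the implicitly defined inverse $g^{-1}$.

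One minor omission: you treat only the case of known $\pi_0$, but the paper's proof of Theorem~\ref{thm:bias} also handles unknown $\pi_0$ via the observation that the MLE $\widehat\pi_0$ can only decrease the sufficient statistic $S$ (since $\sum_{i,j} s_j(\pi^{(i)}\circ\widehat\pi_0^{-1}) \le \sum_{i,j} s_j(\pi^{(i)}\circ\pi_0^{-1})$), and your $f$ is decreasing. The same one-line extension applies here, so adding it would make the parallel complete.
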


Let us mention a few open problems.
The rate of $\widehat{\theta}$ is open for both the Mallows' $\phi$ model and the single parameter IGM model. 
We believe that the rate is of order $1/\sqrt{N}$ for the Mallows' $\phi$ model by applying the delta method with a finer variance analysis.
We can ask the same questions for the generalized Mallows model (GM$_{\vec{\theta}, \pi_0}$).
It is expected that the rate of $\widehat{\pi}$ is of order $e^{-N \beta(\vec{\theta})}$ but the explicit formula of $\beta(\vec{\theta})$ is still missing. 
Deriving a bound of $\beta(\vec{\theta})$ would also be interesting.
The convergence rate of $\widehat{\pi}_0$ for the single parameter IGM model seems to be difficult, since the size of permutations goes to infinity.
We leave the analog of Theorem \ref{thm:cvrate} open.

\subsection{Proof of Theorem \ref{thm:bias}}
We first consider the case where the central ranking $\pi_0$ is known.
Assume w.l.o.g. that $\pi_0 = id$ by suitably relabelling the items. 
Then the model \eqref{eq:Mallowsphi} simplifies to 
\begin{equation}
\label{eq:MallowsID}
\mathbb{P}_{\theta,id}(\pi) = \exp \left(-\theta \inv(\pi) - \ln \Psi(\theta)\right).
\end{equation}
Given $N$ samples $(\pi_i)_{1 \le i \le N}$, the MLE $\widehat{\theta}$ is the solution to the following equation:
\begin{equation}
\label{eq:MLEtheta}
-\frac{\Psi'(\theta)}{\Psi(\theta)} = \frac{1}{N} \sum_{i=1}^N \inv(\pi_i).
\end{equation}
Now by writing $\Psi(\theta) = f(e^{-\theta})$ with $f(q):=\sum_{\pi \in \mathfrak{S}_n} q^{\inv(\pi)}$, we have
\begin{equation*}
-\frac{\Psi'(\theta)}{\Psi(\theta)} = g(e^{-\theta}), \quad \mbox{with} \quad g(q):=\frac{q f'(q)}{f(q)}.
\end{equation*}
So $\widehat{\theta} = - \log g^{-1}(\frac{1}{N} \sum_{i=1}^N \inv(\pi_i))$.
The function $f(q)$ is known as the {\em $q$-factorial} \cite{Stanley}.
We have
\begin{equation}
\label{eq:g}
g(q) = q \sum_{k = 1}^{n-1} \frac{1-(k+1)q^k + kq^{k+1}}{(1-q)(1-q^{k+1})}.
\end{equation}
As observed by \cite{Mallows57, GO09}, for $\pi$ distributed according to \eqref{eq:MallowsID}, the number of inversions has the same distribution as a sum of independent {\em truncated geometric random variables}. That is,
\begin{equation*}
\inv(\pi) \stackrel{d}{=} G_{e^{-\theta},1} + \cdots + G_{e^{-\theta},n} \quad \mbox{for } \pi \sim \mathbb{P}_{\theta, id},
\end{equation*}
where $\mathbb{P}(G_{q,k} = i) = q^i(1-q)/(1-q^k)$ for $i \in \{0,1,\ldots, k-1\}$.
Thus, $\mathbb{E}\inv(\pi) = g(e^{-\theta})$. 
By elementary analysis, $\theta \mapsto g(e^{-\theta})$ is strictly convex and decreasing. 
So its inverse function $q \mapsto -\log g^{-1}(q)$ is strictly convex. 
By Jensen's inequality, 
$- \mathbb{E} \log g^{-1}\left( \frac{1}{N} \sum_{i=1}^N \inv(\pi_i) \right) 
 > - \log g^{-1}\left( \mathbb{E}\left( \frac{1}{N} \sum_{i=1}^N \inv(\pi_i) \right) \right)$,
which implies that $\mathbb{E} \widehat{\theta} > \theta$.

Now consider the case where the central ranking $\pi_0$ is unknown. 
The MLE $\widehat{\theta}$ is given by
\begin{equation}
\widehat{\theta} = - \log g^{-1} \left(\frac{1}{N} \sum_{i = 1}^N \inv(\pi_i \circ \widehat{\pi}_0^{-1}) \right),
\end{equation}
where $g$ is defined as in \eqref{eq:g}, and $\widehat{\pi}_0$ is the MLE of $\pi_0$.
By the definition of $\widehat{\pi}_0$,
\begin{equation*}
\sum_{i = 1}^N \inv(\pi_i \circ \widehat{\pi}_0^{-1}) \le \sum_{i = 1}^N \inv(\pi_i \circ \pi_0^{-1}).
\end{equation*}
Since $q \mapsto -\log g^{-1}(q)$ is strictly convex and decreasing, we get
\begin{align*}
\mathbb{E}\widehat{\theta} & = - \mathbb{E} \log g^{-1}\left(\frac{1}{N}   \sum_{i = 1}^N \inv(\pi_i \circ \widehat{\pi}_0^{-1}) \right) \\
&\ge - \mathbb{E} \log g^{-1}\left(\frac{1}{N}   \sum_{i = 1}^N \inv(\pi_i \circ \pi_0^{-1}) \right) \\
& > - \log g^{-1}\left( \mathbb{E}\left( \frac{1}{N} \sum_{i=1}^N \inv(\pi_i \circ \pi_0^{-1}) \right) \right) = \theta,
\end{align*}
where the last equality follows from the fact that $\pi_i \circ \pi_0^{-1}$ is distributed according to \eqref{eq:MallowsID} for each $i$.

\subsection{Proof of Theorem \ref{thm:cvrate}}
Assume w.l.o.g. that the true central ranking $\pi_0 = id$. 
We aim to find the bounds of $\mathbb{P}_{\theta,id}(\widehat{\pi}_0 \ne id)$ given the dispersion parameter $\theta$.

For $\pi, \pi' \in \mathfrak{S}_n$, we say that $\pi$ is more likely than $\pi'$, denoted $\pi \succeq \pi'$, if 
$\sum_{i = 1}^N  \inv(\pi_i \circ \pi^{-1}) \le \sum_{i = 1}^N  \inv(\pi_i \circ \pi'^{-1})$.
For $1 \le j < k \le n$, let $(j,k)$ be the transposition of $j$ and $k$.
By the union bound, we get
\begin{equation*}
\mathbb{P}_{\theta,id}((1,2) \succeq id) \le \mathbb{P}_{\theta,id}(\widehat{\pi}_0 \ne id)  \leq  \sum_{\pi \in \mathfrak{S}_n} \mathbb{P}_{\theta,id}(\pi \succeq id).
\end{equation*}
{\bf Lower bound:} 
For any permutation $\pi \in \mathfrak{S}_n$, if $\pi(1) > \pi(2)$, then $\inv(\pi \circ (1,2)) = \inv(\pi) - 1$, 
and if $\pi(1) < \pi(2)$, then $\inv(\pi \circ (1,2)) = \inv(\pi) + 1$.
Thus, $\sum_{i = 1}^N \inv(\pi_i \circ (1,2))$ equals to
\begin{equation*}
 \sum_{i = 1}^N \inv(\pi_i) + \#\{i: \pi_i(1) < \pi_i(2) \} - \#\{i: \pi_i(1) >\pi_i(2) \}.
\end{equation*}
Consequently, $\mathbb{P}_{\theta,id}((1,2) \succeq id)$ is given by
\begin{align}
\label{eq:12}
& \quad ~ \mathbb{P}_{\theta,id}\Bigg( \#\{i: \pi_i(1) < \pi_i(2) \} \le \#\{i: \pi_i(1) >\pi_i(2) \}\Bigg) \notag\\
& = \mathbb{P}_{\theta,id}\left( \#\{i: \pi_i(1) >\pi_i(2)\}  \ge  \frac{N}{2} \right).
\end{align}
Note that $\mathbb{P}_{\theta,id}(\pi_i(1) > \pi_i(2)) = e^{-\theta} \mathbb{P}_{\theta,id}(\pi_i(1) < \pi_i(2))$ which implies that
\begin{equation}
\label{eq:Mallowshalf}
\mathbb{P}_{\theta,id}(\pi_i(1) > \pi_i(2)) = \frac{1}{1 + e^{\theta}}.
\end{equation}
Combining \eqref{eq:12} and \eqref{eq:Mallowshalf} yields
\begin{equation*}
\label{eq:MallowsLB}
\begin{aligned}
\mathbb{P}_{\theta,id}((1,2) \succeq id) &= \mathbb{P}\left(\Bin\left(N,  \frac{1}{1+e^{\theta}}\right) \ge \frac{N}{2} \right)   \\
                                              &  \sim \frac{1}{1- e^{-\theta}} \sqrt{\frac{2}{\pi N}} \left( \cosh\frac{\theta}{2}\right)^{-N},
\end{aligned}                                              
\end{equation*}
where $\Bin(N, p)$ is a binomial random variable with parameters $(N, p)$,
and the last estimate follows from the large deviation bound \cite{AG89} that for $p<a$, 
\begin{equation*}
\mathbb{P}(\Bin(N, p) > a N) \sim \frac{(1-p)\sqrt{a}}{(a-p)\sqrt{2 \pi (1-a)N}} e^{-N H(a, p)},
\end{equation*}
where $H(a,p) := a \log\left(\frac{a}{p} \right) + (1- a) \log\left(\frac{1- a}{1- p} \right)$ is the relative entropy between $\Bin(N, p)$ and $\Bin(N, a)$. 

{\bf Upper bound:} 
We need the following comparison result.
\begin{proposition}
\label{comparison}
For $j,k \in \{1, \ldots, n\}$ and $j < k$, we have
\begin{equation}
\label{eq:MallowsUB}
\mathbb{P}_{\theta,id}((j,k) \succeq id) \le \left(\cosh \frac{\theta}{2} \right)^{-N}.
\end{equation}
\end{proposition}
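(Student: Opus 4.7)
The plan is a Chernoff bound that exploits the involution $\pi \leftrightarrow \pi\circ(j,k)$ together with the combinatorial fact that a transposition changes the inversion count by an odd amount. For a single sample $\pi \sim \mathbb{P}_{\theta,id}$, set $D := \inv(\pi) - \inv(\pi\circ(j,k))$ and let $D_1,\dots,D_N$ be i.i.d.\ copies from the $N$ samples. Since $(j,k)^{-1} = (j,k)$, the event $\{(j,k) \succeq id\}$ is $\{\sum_i D_i \ge 0\}$, so Markov's inequality gives
\begin{equation*}
\mathbb{P}_{\theta,id}((j,k) \succeq id) \le \bigl(\mathbb{E} e^{tD}\bigr)^N \qquad (t > 0),
\end{equation*}
and the task reduces to showing $\mathbb{E}e^{\theta D/2} \le 1/\cosh(\theta/2)$ at the choice $t = \theta/2$.

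First I would verify by a direct inversion-count argument that $D$ is always an odd nonzero integer. Writing out the contribution of each pair of positions to $\inv(\pi) - \inv(\pi\circ(j,k))$, only the pair $(j,k)$ itself (contributing $\pm 1$) and the positions $i$ strictly between $j$ and $k$ with $\pi(i)$ strictly between $\pi(j)$ and $\pi(k)$ (each contributing $\pm 2$) give a net change. Hence $D \in \{\pm(2m+1) : m \ge 0\}$, and in particular $\mathbb{P}(D = 0) = 0$.

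Next I would exploit the involution. Because $\mathbb{P}_{\theta,id}(\pi\circ(j,k)) = e^{\theta D(\pi)}\, \mathbb{P}_{\theta,id}(\pi)$ and because $\pi\mapsto\pi\circ(j,k)$ bijects $\{D = d\}$ with $\{D = -d\}$, summing over these fibres yields the reversibility-like identity $\mathbb{P}(D=d) = e^{-\theta d}\,\mathbb{P}(D=-d)$. Writing $\mathbb{P}(D=d) = c_d e^{-\theta d/2}$, this forces $c_d = c_{-d} \ge 0$, and combined with the parity step only odd $d$ contribute. Normalization then becomes
\begin{equation*}
1 = \sum_{d} c_d e^{-\theta d/2} = 2 \sum_{\substack{d\ge 1 \\ d \text{ odd}}} c_d \cosh(\theta d/2),
\end{equation*}
while the target quantity is
\begin{equation*}
\mathbb{E}e^{\theta D/2} \;=\; \sum_d c_d \;=\; 2\!\!\sum_{\substack{d\ge 1 \\ d \text{ odd}}} c_d.
\end{equation*}
Since $\cosh(\theta d/2) \ge \cosh(\theta/2)$ for every $d \ge 1$, comparing the two displays gives $\mathbb{E}e^{\theta D/2} \le 1/\cosh(\theta/2)$, and substituting into the Chernoff bound proves the proposition.

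The step I expect to be the one that actually carries weight is the parity observation $|D|\ge 1$: without it the term $c_0$ survives, and the argument only delivers the trivial bound $\mathbb{E}e^{\theta D/2} \le 1$ (which is what a straight Cauchy--Schwarz on $Z^{-1}\sum_\pi e^{-\theta(\inv(\pi)+\inv(\pi\circ(j,k)))/2}$ yields). The combinatorial identity that every transposition perturbs the inversion count by an odd integer is therefore the one genuinely new ingredient; the rest is packaged from the Mallows measure's built-in reversibility under $(j,k)$.
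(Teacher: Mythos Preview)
Your proof is correct and follows essentially the same route as the paper's: both arguments apply the Chernoff/Cram\'er bound to the i.i.d.\ increments $D_i$, evaluate the moment generating function at $\theta/2$, use the involution $\pi\mapsto\pi\circ(j,k)$ to obtain the detailed-balance relation $\mathbb{P}(D=d)=e^{-\theta d}\,\mathbb{P}(D=-d)$, and finish with the monotonicity $\cosh(\theta d/2)\ge\cosh(\theta/2)$ for odd $d\ge 1$. Your reparametrization $\mathbb{P}(D=d)=c_d e^{-\theta d/2}$ with $c_d=c_{-d}$ packages the same algebra a bit more transparently, and your identification of the parity fact $|D|\ge 1$ as the decisive ingredient matches exactly what the paper relies on.
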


Now decompose each permutation $\pi \in \mathfrak{S}_n$ into a product of transpositions, say $\pi = \sigma_1 \circ \cdots \circ \sigma_k$. 
We have
\begin{align}
& \mathbb{P}_{\theta, id}(\pi \succeq id) \notag \\
& \qquad  \le  \mathbb{P}_{\theta, id} (\exists i \in [k]: \sigma_1 \circ \cdots \circ \sigma_i \succeq \sigma_1 \circ \cdots \circ \sigma_{i-1})     \notag \\
&\qquad \le \#_{trans}(\pi) \left(\cosh \frac{\theta}{2} \right)^{-N}, \label{eq:impinc}
\end{align}
where $\#_{trans}(\pi)$ is the number of transpositions in $\pi$.
For any permutation $\pi \in \mathfrak{S}_n$, $\#_{trans}(\pi) = n - \#_{cyc}(\pi)$ with $\#_{cyc}(\pi)$ the number of cycles in $\pi$. 
The upper bound \eqref{eq:upperbd} follows from \eqref{eq:impinc}, the union bound and the fact that 
$\sum_{\pi \in \mathfrak{S}_n} \#_{cyc}(\pi) = n! H_n$.

\begin{proof}[Proof of Proposition \ref{comparison}]
A similar argument as before shows that 
\begin{equation*}
\mathbb{P}((j, k) \succeq id) = \mathbb{P}((1,2) \succeq id) \quad \mbox{for } k - j =1.
\end{equation*}
Now let $\ell := k - j \in \{1, \ldots, n-1\}$.
It is not hard to see that for any permutation $\pi \in \mathfrak{S}_n$, the number of inversions $\inv(\pi \circ (j, k))$ can take $2 \ell$ values:
\begin{equation*}
\inv(\pi \circ (j, k)) \in \{\inv(\pi) \pm m: m = 1,3, \ldots, 2 \ell -1 ) \}.
\end{equation*}
For $m \in \{-2 \ell + 1, -2 \ell + 3, \ldots, 2 \ell -3, 2 \ell -1\}$, let
\begin{equation*}
p_m: = \mathbb{P}_{\theta,id}\Bigg(\inv(\pi \circ (j, k)) = \inv(\pi) + m\Bigg).
\end{equation*}
Observe that $\mathbb{P}_{\theta,id}((j,k) \succeq id)$ is given by
\begin{align}
\label{eq:estjk}
& \quad ~ \mathbb{P}_{\theta,id}\left( \sum_{i=1}^N \Bigg( \inv(\pi_i \circ (j,k)) - \inv(\pi_i)  \Bigg)\le 0   \right) \notag \\
& = \mathbb{P}\left(\sum_{i = 1}^N Z_i \le 0 \right),
\end{align}
where $Z_i$ are i.i.d. categorical random variables such that $Z_i = m$ with probability $p_m$ for $m \in \{-2 \ell + 1, -2 \ell + 3, \ldots, 2 \ell -3, 2 \ell -1\}$.
By Cramer's theorem \cite{DZ},
\begin{equation}
\label{eq:LDP}
 \mathbb{P}\left(\sum_{i = 1}^N Z_i \le 0 \right) \le \exp \left(-N \sup_{\lambda} \{- \log F(\lambda)\} \right),
\end{equation}
where 
\begin{equation*}
F(\lambda): = \sum_{m = -2 \ell + 1, \, m \, \mbox{\tiny odd}}^{2 \ell -1} p_m e^{\lambda m},
\end{equation*}
is the moment generating function of $Z_i$. 
Note that for $m > 0$, we have $p_{-m} = e^{\theta m} p_m$.
Therefore,
$\sum_{m = 1, \, m \, \mbox{\tiny odd}}^{2 \ell -1} p_m (1+e^{\theta m}) =  1$.
Moreover,
\begin{equation*}
\begin{aligned}
\sum_{m = 1, \, m \, \mbox{\tiny odd}}^{2 \ell -1} p_m (1+e^{\theta m}) &= \sum_{m = 1, \, m \, \mbox{\tiny odd}}^{2 \ell -1} p_m e^{\frac{\theta m}{2}}(e^{\frac{\theta m}{2}}+e^{-\frac{\theta m}{2}}) \\
& \ge \sum_{m = 1, \, m \, \mbox{\tiny odd}}^{2 \ell -1} p_m e^{\frac{\theta m}{2}}(e^{\frac{\theta}{2}}+e^{-\frac{\theta}{2}}),
\end{aligned}
\end{equation*}
which yields $\sum_{m = 1, \, m \, \mbox{\tiny odd}}^{2 \ell -1} p_m e^{\frac{\theta m}{2}} \le  \frac{1}{2} (\cosh \frac{\theta}{2})^{-1}$.
As a consequence,
\begin{equation}
\label{eq:dual}
\begin{aligned}
\sup_{\lambda} \{- \log F(\lambda)\} & \ge - \log F\left(\frac{\theta}{2} \right) \\
& = - \log  \sum_{m = 1, \, m \, \mbox{\tiny odd}}^{2 \ell -1} 2 p_m e^{\frac{\theta m}{2}} \\
& \ge - \log \left(\cosh \frac{\theta}{2} \right)^{-1}.
\end{aligned}
\end{equation}
By \eqref{eq:estjk}, \eqref{eq:LDP} and \eqref{eq:dual}, we get the estimate \eqref{eq:MallowsUB}.
\end{proof}

\section{Experimental Results}
In this section we apply Algorithm \ref{algo1} to provide experimental results on synthetic data and two real-world data: 
APA election data (large $N$, small $t_{max}$), and University's homepage search (small $N$, large $t_{max}$).

\subsection{Synthetic Data}
We generate $50$ sets of $N = 1000$ rankings from the IGM model \eqref{eq:IGM} with $\vec{\theta} = (1,0.9,0.8,0.7,0.6,0.5, 0, \ldots)$ and $\pi_0 = id$.
We restrict all observed rankings to the first $t_{\max} = 6$ components.
Table \ref{t1} displays the percentage of the model sizes selected by Algorithm \ref{algo1}.
\begin{table}[t]
\caption{Percentage of model sizes selected for $50$ simulated data sets.}
\label{t1}
\vskip 0.15in
\begin{center}
\begin{small}
\begin{sc}
\begin{tabular}{ | l | c | c | c | c | c | c |} 
 \hline
 Model size $t$ & $1$ & $2$ & $3$ & $4$ & $5$ & $6$\\
 \hline
 Percentage ($\%$)& $0$ & $65$ & $35$ & $0$ & $0$ & $0$   \\
 \hline 
 \end{tabular} 
\end{sc}
\end{small}
 \end{center}
 \vskip -0.1in
\end{table}

Now we fit the IGM model by MLE with the preselected model size $t$. 
The estimate $\widehat{\theta}_1$ lies in the range $[0.94, 1.08]$ with mean $1.0$ and standard deviation $0.03$, 
and $\widehat{\theta}_2$ in the range $[0.84, 0.95]$ with mean $0.9$ and standard deviation $0.02$.
Moreover, the estimated central rankings restricted to the top $6$ ranks are always $(1|2|3|4|5|6)$.

We also generate $50$ sets of $N = 1000$ rankings from the IGM model restricted to the first $t_{\max} = 10, 20$ and $40$ components.
Table \ref{t33} shows the accuracy of estimated ranking and average training time by the IGM model of model size $t = 1$, $t = 10$ and Algorithm \ref{algo1}. 
For rankings of small length, the IGM model of small size gives good accuracy and uses less training time. 
Algorithm \ref{algo1} is more appealing for rankings of large length.
It has better accuracy than the IGM model of small size, and is less time-consuming than the IGM model of large size. 

\begin{table}[t]
\caption{Accuracy of estimated rank $\&$ average training time for $50$ simulated data with $t_{max} = 10$ (resp. $t_{max} = 20$, $t_{max} = 40$) and $\vec{\theta} = (1, 0.975, \ldots, 0.775, 0, \ldots)$ (resp. $\vec{\theta} = (1, 0.975, \ldots, 0.525, 0 , \ldots)$, $\vec{\theta} = (1, 0.975, \ldots, 0.025, 0 , \ldots)$)
by the IGM model of model size $t = 1$, $t = 10$ and Algorithm \ref{algo1}.
}
\label{t33}
\vskip 0.15in
\begin{center}
\begin{small}
\begin{sc}
\begin{tabular}{ | l | c | c | c |} 
 \hline
 $t_{max} = 10$  & IGM($t = 1$) & IGM($t = 10$) & Alg \ref{algo1} \\
 \hline
 Acc. est. rank   & \pmb{$100 \%$} & \pmb{$100 \%$} & \pmb{$100 \%$}  \\
 \hline
 Ave. time & \pmb{$1.56$ s} & $14.45$ s & $2.80$ s    \\
 \hline 
 \hline
 $t_{max} = 20$  & IGM($t = 1$) & IGM($t = 10$) & Alg \ref{algo1} \\
 \hline
 Acc. est. rank   & $94 \%$ & \pmb{$100 \%$} & \pmb{$100 \%$}  \\
 \hline
 Ave. time & \pmb{$5.73$ s} & $54.45$ s & $24.42$ s    \\
 \hline 
 \hline
 $t_{max} = 40$  & IGM($t = 1$) & IGM($t = 10$) & Alg \ref{algo1} \\
 \hline
 Acc. est. rank   & $82 \%$ & \pmb{$100 \%$} & \pmb{$100 \%$}  \\
 \hline
 Ave. time & \pmb{$70.26$ s} & $684.65$ s & $391.20$ s    \\
 \hline 
 \end{tabular} 
\end{sc}
\end{small}
 \end{center}
 \vskip -0.1in
\end{table}

\subsection{APA Data}
We consider the problem of ranking with a small number of items and large sample size. 
The data consists of $N = 15449$ rankings over $t_{max} = 5$ candidates during the American Psychological Association's presidential election in $1980$. 
Among these rankings, there are only $5738$ complete rankings and the number of distinct rankings is $n = 207$.
See \cite{CCC, Diaconis89} for further background.

Table \ref{t3} displays the estimate of $\theta$ for the single parameter IGM with different model sizes. 
Applying Algorithm $1$ yields the selection of $t = 2$. 
Then by fitting the IGM model with $t = 2$, we get $\widehat{\theta}_1 = 0.46$, $\widehat{\theta}_2 = 0.54$ 
and $\widehat{\pi}_0 = (3|1|5|4|2)$.
By the second order analysis, \cite{Diaconis89} argued that there is a strong effect of choosing candidates $\{1,3\}$ and $\{4,5\}$, with candidate $\{2\}$ in the middle.
Our result suggests that the pair of candidates $\{1,3\}$ be in a more favorable position.

\begin{table}[t]
\caption{Estimates of $\theta$ for the single parameter IGM model with $t \in [1,5]$.}
\label{t3}
\vskip 0.15in
\begin{center}
\begin{small}
\begin{sc}
\begin{tabular}{ |c|c|c|c|c|c| } 
\hline
Model size $t$ & $1$  & $2$ & $3$ & $4$ & $5$  \\
\hline
Estimated $\theta$ & $0.47$  & $0.50$ &  $0.54$ & $0.62$ &  $0.72$ \\
 \hline 
\end{tabular} 
\end{sc}
\end{small}
 \end{center}
 \vskip -0.1in
\end{table}

\subsection{University's Homepage Search}
We consider the problem of learning a domain-specific search engine 
with the data collected by \cite{CSS}.
The data consists of $157$ universities, the {\em queries}, and $21$ search engines, the {\em experts}. 
Each expert search engine outputs a ranked list of up to $t_{max} = 30$ URLs when queried with the university's name.
The target output is the university's homepage.
There are $10$ universities without data, and some search engines return empty list.
So there are $147$ ranking problems with sample size $N \le 21$, and each sample has length ranging from $1$ to $30$.

For each query, we fit the IGM model by MLE with Algorithm \ref{algo1} to calculate the rank of the university's homepage under the estimated central ranking $\widehat{\pi}_0$.
This rank measures the correctness of the model.
If the target homepage is not among the URLs returned by the search engines, we put it to the end of the list.
The central ranking is estimated by the SORTR heuristic \cite{MB10}.
Table \ref{t4} is an extract of the estimated rank of the target homepages.

\begin{table}[t]
\caption{A list of $10$ universities and their estimated ranks.}
\label{t4}
\vskip 0.15in
\begin{center}
\begin{small}
\begin{sc}
\begin{tabular}{ | l | c| } 
 \hline
University's name &  Estimated rank  \\ 
\hline
Oregon Health Sciences Univ.  & $2$ \\ 
Ouachita Baptist Univ.  & $1$ \\ 
Our Lady of the Lake Univ. & $6$  \\
Pacific Baptist Univ. & $2$ \\
Pacific Christian College & $8$ \\
Pacific Lutheran Univ. & $1$ \\
Worcester State College & $1$ \\
Yeshiva Univ. & $19$ \\
York College of Pennsylvania & $6$ \\
Young Harris College & $1$ \\
 \hline 
\end{tabular} 
\end{sc}
\end{small}
 \end{center}
 \vskip -0.1in
\end{table}

Our training model is slightly different from that in \cite{MB10},
since they used the parameterization $\vec{\theta}= (\theta_1, \ldots, \theta_{t-1}, \theta_t, \ldots. \theta_t, 0 , \ldots)$ for the top-$t$ IGM model.
Figure \ref{fig:single} provides the estimates of $\theta$ for the single parameter IGM model with different model sizes. 
By Algorithm \ref{algo1}, $68 \%$ select $t = 5$, $20 \%$ select $t = 1$, and $11 \%$ select $t = 2$ over all $147$ queries.
We also computed the rank of the homepage for each query and each model size $t$.
Table \ref{t5} summarizes the mean and the median of the target rank for these models.

\begin{figure}[ht]
\vskip 0.2in
\begin{center}
\centerline{\includegraphics[width=1 \columnwidth]{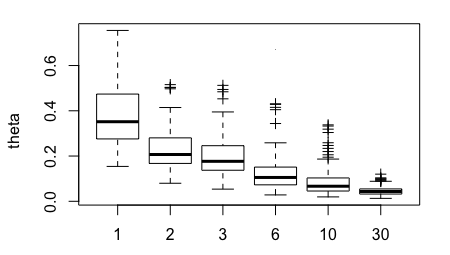}}
\caption{Estimates of $\theta$ for the single parameter IGM model with $t \in \{1,2,3,6,10,30\}$.}
\label{fig:single}
\end{center}
\vskip -0.2in
\end{figure}

\begin{table}[t]
\caption{Mean and median rank of the target homepage under the IGM models.}
\label{t5}
\vskip 0.15in
\begin{center}
\begin{small}
\begin{sc}
\begin{tabular}{ | l | c |  c  | c | c | c | c || c |} 
\hline
Size $t$ & $1$ & $2$ & $3$ & $6$ & $10$ & $30$ & Alg $1$ \\ 
\hline
Mean R.       & $3.7$ & $4.2$ & $4.9$ & $8.0$ & $13.2$ & $18.9$ & $6.8$ \\ 
\hline
Med. R.   & $2$  & $2$ & $2$ & $3$ & $5$ & $6$ & $3$ \\
\hline
\end{tabular} 
\end{sc}
\end{small}
 \end{center}
 \vskip -0.1in
\end{table}

\cite{MB10} got a mean rank around $15$ and a median rank around $10$.
Compared to their results, our experiments give better target rank for small model sizes. 
This is reasonable because for each query there are only $N \le 21$ samples and large model sizes may cause overfitting.
Small median ranks in Table \ref{t5} also supports the validity of the IGM model.

\section{Conclusion}
In this paper we study various Mallows ranking models. 
The parameters of interest are the dispersion $\theta$ (or $\vec{\theta}$), and the central ranking $\pi_0$. 
Though there have been efficient algorithms to estimate the MLE $(\widehat{\theta}, \widehat{\pi}_0)$, not much is known about the statistical properties of $(\widehat{\theta}, \widehat{\pi}_0)$ except the consistency of $\widehat{\theta}$.
Aiming to fill this gap, we prove the biasedness and convergence rate of the MLE of the Mallows' $\phi$ model and the single parameter IGM model.

To compare a large number of items, an infinite ranking model is often
useful. A natural infinite generalization of the finite Mallows model appeared in several contexts \cite{GO09, MB10}.
But neither of them are aware of the other.
In this work we make a clear connection between these infinite rankings with further analysis. 
Relying on a renewal property of the single parameter IGM model, we propose an algorithm to choose the model size `$t$' automatically.
The `$t$' selection algorithm is tested over synthetic and real data, and shows good performance.

% In the unusual situation where you want a paper to appear in the
% references without citing it in the main text, use \nocite
\nocite{langley00}

\bibliography{unique}
\bibliographystyle{icml2019}
\end{document}